\newtheorem{lemma}{Lemma}
\newtheorem{theorem}{Theorem}
\newtheorem{definition}{Definition}[section]
\newcommand{\cK}{{\cal K}}
\newcommand{\cG}{{\cal G}}
\newcommand{\cD}{{\cal D}}
\newcommand{\cO}{{\cal O}}
\newcommand{\cR}{{\cal R}}
\newcommand{\diag}{\mathop{\bf diag}}
\newtheorem{DE}{Definition}[section]
\newtheorem{NO}[DE]{Notation}
\newtheorem{RE}[DE]{Remark}
\newtheorem{TEMP}{Template}
\newcommand{\argmax}{\mathop{\rm argmax}}
\title{\LARGE \bf
Robust Control of Cascading Power Grid Failures using Stochastic Approximation
}
\author{Daniel~Bienstock$^{1}$ and Guy Grebla$^{2}$
\thanks{*This research
was partially funded by LANL award ``Grid Science'' and DTRA award  HDTRA1-13-1-0021.}
\thanks{$^{1}$D. Bienstock is with the Departments 
of Industrial Engineering and Operations Research, and Applied Physics and Applied Mathematics, Columbia University, New York, NY, 10027 USA.
{\tt\small email: dano@columbia.edu} }
\thanks{$^{2}$G. Grebla is with the Department of Electrical Engineering, Columbia University, New York, NY, 10027 USA.
{\tt\small email: guy@ee.columbia.edu} }
}
\begin{document}

\maketitle

\begin{abstract}
Cascading failure of a power transmission system are initiated by an exogenous event that disable a 
set of elements (e.g., lines) followed by a sequence of interrelated failures (or more precisely,
trips) of overloaded elements caused by the combination of physics of power flows in the changed
system topology, and controls.  Should this sequence accelerate it can lead to a large system failure
with significant loss of load.  In previous work we have analyzed deterministic algorithms that in
an online fashion (i.e., responding to observed data) selectively shed load so as to minimize the 
amount of lost load at termination of the cascade.  In this work we present a rigorous methodology for
incorporating noise and model errors, based on the Sample Average Approximation methodology for 
stochastic optimization.
\end{abstract}
\section{Introduction}
We present a rigorous methodology for computing robust algorithms for control of cascading
failures of power transmission systems.   We focus on the linearized, or DC, approximation for power 
flows, and on line tripping outages.  We note, however, that the same underlying methodology will 
apply for other models of power flows and other types of equipment outages.
This work extends the approach in \cite{bienstock2011optimal}; related
work is consided in \cite{chertkovpes11}, \cite{carreras2}, \cite{pooyaeppsteinhines}, and references therein.

 In the DC approximation to power flows (see \cite{bergenvittal} for background) a transmission system is modeled by
given a directed graph $G$ with $n$ buses and $m$ lines.  In addition, for each line $e$ we are given 
its {\em reactance} $x_e$ and its {\em limit} $u_e$ (we may also refer to a line $e$ in the form
$e = pq$ so as to indicate its ``from'' bus $p$ and `its `to'' buse $q$.  Additionally, we are given a {\em supply-demand} vector $\beta \in \cR^n$ with the following interpretation.  For a bus $i$, if $\beta_i > 0$ then $i$ is a {\em generator} (a source node) while if
$\beta_i < 0$ then $i$ is a {\em load} (a demand node) and in that case 
$-\beta_i$ is the {\em demand} $d_i$ at $i$.  The condition $\sum_i \beta_i = 0$ is
assumed to hold.  We denote by $\cG$ the set of generators and by $\cD$ the
set of demand nodes.  The linearized power flow problem specifies a variable $f_{pq}$ associated 
with each line $pq$ (active power flow) and a variable $\phi_p$ (phase angle) associated with each bus 
$p$.  The DC approximation is given by the system
 system of equations:  
\begin{eqnarray}
&& N f \ = \ \beta, \ \ \ N^T \phi - X f \ = \ 0, \label{linearpowerflow}\\[-15pt]\nonumber
\end{eqnarray}
\noindent where $N$ denotes the node-arc incidence matrix of $G$ \cite{AMO93} and 
$X = \diag\{x_{ij}\}$.   
\begin{RE}\label{uniqueness}
It can easily be shown that system (\ref{linearpowerflow}) is feasible if and only if $\sum_{i \in K} \beta_i = 0$ for each
component (``island'') $K$ of $G$, and in that case the solution is unique
in the $f$ variables.  
\end{RE}

In this statement the line limits $u_{pq}$ play no role.  It may be the case that in the (unique)
solution $f$ to (\ref{linearpowerflow}) we have $|f_{pq}| > u_{pq}$.  In that case line $pq$ is
\textit{at risk}, and, unless some control action is taken, will eventually trip.  The ``eventually'' is
an imprecise statement.  Certainly, in the context of cascade modeling, the $u_{pq}$ should be the 
``emergency'' line limits.  Nevertheless, care should be taken to model the non-precise nature of
line tripping.  We will comment on this point, again, later. 

We can now outline the model for discrete-time cascading failures   
from \cite{bienstock2011optimal}.  Similar models have been proposed in the literature, see

\begin{center}
  \fbox{
    \begin{minipage}{0.9\linewidth}
      \hspace*{.9in} \begin{TEMP}{GENERIC CASCADE TEMPLATE}\label{gencasc} \end{TEMP}
      {\bf Input}: a power grid with graph $G$ (post-initiating event). Set $G^1 = G$.\\
      {\bf For} $t = 1, 2, \ldots$ {\bf Do} \\
        \hspace*{.2in}(comment: time-step $t$ of the cascade)\\
        \hspace*{.2in}{\bf 1.} Set $f^t = $ vector of power flows in $G^t$.\\
        \hspace*{.2in}{\bf 2.} Set $\cO^{t} = $ set of lines of $G^t$ that become outaged \\
        \hspace*{.4in}in time-step $t$. \\
        \hspace*{.2in}{\bf 3.} Set $G^{t+1} = G^{t} - \cO^t$.  Adjust loads and\\ 
        \hspace*{.4in}generation in $G^{t}$.
    \end{minipage}
  }
\end{center}
In this template, each ``time-step'' models a time increment of length $\Delta t > 0$.  In Step 3, when a load imbalance results in some island, we assume that
the larger of generation or load are adjusted, downwards, so as to
attain balance. The model does
not attempt to explain the cascade within resolution finer than $\Delta t$. It should be noted that 
cascades can be extremely rapid -- perhaps lasting seconds.  
However, several cascading failures of notoriety have been significantly slower, such as the 2004 U.S. 
Northeast event \cite{usc}.  For the purposes of this paper, the reader should assume that $\Delta t$ is
of the order of minutes.  We remark that cascades are extremely noisy events, characterized at a fine
time scale by a myriad of events, some of which are extremely challenging to model, such as 
physical contacts of sagging lines with vegetation, and human action (and errors -- which have been
found to take place in many cascades).   Any model of cascading failures which attempts to model time
at a very fine scale (perhaps, continuous time) would need incorporate correspondingly fine models for
such events.  Additionally, there would be a need to resolve ``race-conditions,''  which take place when
many lines can trip within a very short amount of time with potentially very different cascade outcomes (observed by other authors).  
In this paper, for simplicity, we will therefore assume a relatively large $\Delta t$.  We note
that one could
furthermore argue that a control algorithm that is subject to noise and incomplete information would
benefit from relying on a relatively large $\Delta t$.

In \cite{bienstock2011optimal} we considered a general form of control
template, which assumes that the initial exogenous event that sets-off the
cascade has been observed, and that $G$ is the post-event network.  The control entails load shedding; when load
is shed in some connected subnetwork,
generation must be correspondingly adjusted in that subnetwork.

\algrenewcommand\algorithmicdo{}
\algrenewcommand\algorithmicfor{{\bf For}}
\algrenewcommand\algorithmicindent{1.5em}
\floatname{algorithm}{Framework}
\begin{algorithm}
\caption{Cascade Control} 
\label{contcasc}
{\bf Input}: a power grid with graph $G$. Set $G^{(1)} = G$.
\begin{algorithmic}[1]
\State{{\bf Compute} control algorithm}
\For{ $t = 1, 2, \ldots, T-1$}
\Comment{controlled time-step $t$ of the cascade}
\State{Obtain grid measurements} \label{ln:measurements}
\State{\bf Apply control} \label{ln:apply-control}
\State{Set $f^{(t)} = $ vector of resulting power flows in $G^{(t)}$} \label{ln:resulting-flows}
\State{{\parbox[t]{\dimexpr\linewidth-\algorithmicindent}{Set $\cO^{(t)} = $ set of lines of $G^{(t)}$ that become outaged in time-step
  $t$}}}
\State{Set $G^{(t+1)} = G^{(t)} - \cO^{(t)}$}
\State{Adjust loads and generation  in $G^{(t)}$} \label{ln:adjust}
\EndFor
\State{{\bf Termination} (time-step $T$). If $G^{(T)}$ has line
  overloads, proportionally shed demand until all line
  overloads are eliminated. Set $\psi^{(T)} \, \doteq \, \min\left\{ 1 \, , \,
    \max_{j} \{ \frac {|f^{(T)}_{j}|} { u_{j}} \} \right\}$. If $\psi^{(T)} > 1$, then
  any bus $v$ resets its demand to $d^{(T)}_v /\psi^{(T)}$. }
\end{algorithmic}
\end{algorithm}
We stress that $G$ is the {\bf post-initiation} state of the grid, i.e. the
elements disabled in the initial exogenous event are not present.
The termination condition ensures that indeed the cascade is stopped at the
end of the planning horizon \cite{bienstock2011optimal}.  In general, Step 3
consists of any step where a network controller obtains current data.  In Step 4
this data is used to apply the control computed in Step 1 using the measurements
as inputs.  
A completely deterministic version of Framework \ref{contcasc} is considered
in \cite{bienstock2011optimal}.  This algorithm is characterized by two
features:
\begin{itemize}
\item[(s.i)] At time $t$, if $K$ is an island in $G^{(t)}$, then in Step 4 all
  loads in $K$ are scaled by a factor $0 \le \lambda^t_K \le 1$ (as are all
  generator outputs in $K$).
\item [(s.ii)] In Step 6, a line $e$ is tripped if $u_e \le |g^{(t)}_e|$.
\end{itemize}
The objective is to compute the $\lambda^t$ so that at termination the total
load still being delivered is maximized.  The main contribution in \cite{bienstock2011optimal} is an algorithm that solves this problem in polynomial time,
for each fixed value of $T$.  This fact may appear surprising given the nature
of rule (s.i).  There are an exponential number of islands $K$ that could be realized
at time-step $t$ -- how could the algorithm run in polynomial time?

The answer is that the $\lambda^t$ parameters need only be computed for an
{\bf optimal} realization of the controlled cascade.  Having chosen the vectors
$\lambda^1, \lambda^2, \ldots, \lambda^{t-1}$ then a unique state will be
observed in time-step $t$. This is a consequence of the fact that rule (s.ii)
is purely deterministic.  Hence if we somehow know that $\lambda^1, \lambda^2, \ldots, \lambda^{t-1}$ have been optimally chosen, then all that is needed is
for $\lambda^{t}$ to be optimally chosen, as well.  Continuing inductively
we will obtain an optimal control across all time-steps.  To put it in a
pedestrian manner, the controlled cascade plays out like a script, with the
events that transpire at each time-step $t$ known, precisely, in advance.

Thus, the key in
the analysis of the algorithm in \cite{bienstock2011optimal} is that, indeed
we can choose $\lambda^{t}$ optimally (provided that $\lambda^1, \lambda^2, \ldots, \lambda^{t-1}$ have previously been computed optimally).  To prove this
point \cite{bienstock2011optimal} relies on a variant of dynamic programming.
Below we will consider an updated form of this algorithm.

\section{Modeling stochastics}
Even though the algorithm in \cite{bienstock2011optimal} is provably optimal,
it is readily apparent how that algorithm falls short, and in particular may
not prove robust.  This concerns rule (s.ii) -- should this assumption prove
inaccurate it is quite likely that the set of lines that trip in Step 6 will
be different than anticipated in the ``script'' mentioned above.  Not only that,
but the set of islands actually observed at time-step $t$ may be different from
those expected by the ``script'', and thus the computed control does not even
make sense (i.e. we have the wrong parameters $\lambda^t_K$).  In this section
we consider algorithms that not only bypass these shortcomings, but also attain
a form of algorithmic robustness that can be precisely stated.  
\begin{itemize}
\item [(L.a)] We will incorporate stochastics into the line-tripping rule. More
  precisely, at time $t$ we compute, for  each line $i$, the quantity 
$\tilde{f}^t_i=(1+\epsilon^t_i) |f^{(t)}_i|$, where 
where $\epsilon^t_i$ is a random variable from a known distribution. 
Additionally, we assume that $|\epsilon^t_i|$  is bounded by $1 > b\geq 0$.  Here, it is assumed that $b << 1$.  The
\textit{stochastic line-tripping rule} is that line $i$ is tripped if $\tilde f^t_i \ge u_i$.  In what follows, we will $\tilde{f}^t_i$ is the \textit{noisy flow} in line $i$ at time-step $t$.
\item [(L.b)] Under the stochastic line tripping rule, control rule (s.i) does not
  make sense because we do not know, in advance, the set of islands that will be
  realized at time-step $t$. In fact, potentially, any island could be realized.
  Instead, the control will compute a \textit{single} value $\lambda^t$ which
  will be used to scale all loads at time-step $t$.
\end{itemize}
We can now state the optimization problem of interest.
\begin{definition} {\bf Optimal Robust Control Problem}.  Compute scaling
values $\lambda^1, \ldots, \lambda^{T-1}$ such that subject to rules (L.a), (L.b) we maximize the \textit{expected yield},
where by ``yield'' we mean the load that is served at termination.  
\end{definition}

{\bf Remarks.} 
\begin{itemize}
\item In (L.a) we do not make any assumptions as to the source
or nature of the stochastics.  In fact, we generically model ``noise'' in this
fashion, so as to be able to capture \textit{any} form of uncertainty that could
hamper a control algorithm, so long as the magnitude of the errors is not
overly large.  As we will see in our experiments, simply allowing stochastic
tripping gives rise to a large variety of cascading outcomes.  A control that
maximizes expected load will thus be robust with respect to many alternative
histories that the system could follow. However, there is a specific setting
in which rule makes sense -- we can use (L.a) to measure \textit{errors} in 
line measurements.  
\item The termination rule in Framework \ref{contcasc} needs to be revised when
rule (L.a) is applied so as to make certain that the final load shedding does
terminate the cascade.  We do so by redefining 
$\psi^{(T)} \, \doteq \, \min\left\{ 1 \, , \,
    \max_{j} \{ \frac {\tilde f^{(T)}_{j}} { (1-b)u_{j}} \} \right\}$
\end{itemize}
Since $b$ is small, this amounts to a small correction, at termination.
\section{Solving the optimal robust control problem under a given noise vector
$\epsilon$}
Some of the algorithmic steps presented here echo some steps in \cite{bienstock2011optimal}.
However, because of (L.a) and (L.b), the underlying mathematical nature 
of the problem is fundamentally different, as are the actual proofs. First we introduce some notation. 
\begin{itemize} 
\item We define the function $\eta^{(T)}_G(z | \tilde{f})$
as the maximum expected yield given that in the first time-step the noisy flows
are $\tilde{f}^{(1)} = z \tilde{f}$.
\item We denote $G^t$ the power grid at the beginning of time-step $t$, a random 
variable under (L.a).  
\item Denote by $\epsilon$ the $m\times T$ matrix of $\epsilon_i^{(t)}$ values for all
$i$ and $1\leq t\leq m$. 
\item For $z \ge 0$ real, let $\Theta^{(T)}_G(z | f, \epsilon)$ be the
{\em deterministic} maximum yield obtained if at the start of the first time-step
the flows in the grid are equal $z f$, and all the $\epsilon$ is a given,
fixed, vector of values $\epsilon_i^{(t)}$ (rather than random). 
Note that $\Theta^{(T)}_G$ is deterministic since all
realizations of the random variables $\epsilon_i^{(t)}$ are provided. 
\end{itemize}
Clearly the
following holds,
\begin{eqnarray}
  \eta^{(T)}_G(z |
  \tilde{f}) = E_{\epsilon} \left [ \Theta^{(T)}_G(z | f',
\epsilon) \right ],
\end{eqnarray}
where $f'_i = \frac {\tilde{f}_i} {1+\epsilon_i^{(T)}}$.

\begin{lemma} \label{lem:breakpoints2}
  $\Theta^{(1)}_G(z | f', \epsilon)$ is a nondecreasing piecewise-linear
  function of $z$ with two pieces, the second one of which has zero slope.
\end{lemma}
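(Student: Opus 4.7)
The plan is to specialize Framework~\ref{contcasc} to the case $T=1$. Here the controlled loop (over $t=1,\ldots,T-1$) is empty, so the sole computation is the termination step, evaluated with the revised rule from the second bullet of Section~2's remarks. I expect the lemma to follow by inspection once that single closed-form expression for the yield is written down, so the only real work is to derive it and split it at its break.

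Concretely, I would first record that because the actual flows entering termination equal $zf'$, the noisy flow at each line $j$ is $\tilde f^{(1)}_j = z(1+\epsilon^{(1)}_j)|f'_j|$. Defining
\[
\alpha \;\doteq\; \max_{j}\, \frac{(1+\epsilon^{(1)}_j)\,|f'_j|}{(1-b)\,u_j},
\]
the termination scaling is $\psi^{(1)} = \max\{1,\, z\alpha\}$ (reading the termination rule in its intended sense: no shedding is invoked when every line is below its margin-adjusted limit, otherwise all demands are divided by that maximum). Since no controlled step intervenes between the input and termination, the total served demand is $D\cdot (z/\psi^{(1)})$, where $D$ is the total demand of $G$. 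Splitting at the breakpoint $z=1/\alpha$ produces
\[
\Theta^{(1)}_G(z\mid f',\epsilon) \;=\; \begin{cases} zD, & 0 \le z \le 1/\alpha,\\[2pt] D/\alpha, & z \ge 1/\alpha.\end{cases}
\]
The two branches agree at $z=1/\alpha$, so the function is continuous, nondecreasing, piecewise linear, has exactly two pieces, and the second has zero slope, as claimed.

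The only loose ends I would address explicitly are: (i) the degenerate case $\alpha=0$, which occurs precisely when $f'\equiv 0$ on every line, in which the expression collapses harmlessly to the single linear piece $zD$; and (ii) verifying self-consistency on the right piece, i.e.\ that dividing all demands by $\psi^{(1)}=z\alpha$ rescales the worst noisy flow down to exactly $(1-b)u_j$, so no further shedding is triggered and the yield $D/\alpha$ is actually achieved. Neither of these is a substantive obstacle; the lemma is essentially an unpacking of the revised termination rule in the single-endpoint case, and the main thing to be careful about is simply that the correct $\max/\min$ semantics (and the $(1-b)$ safety factor) are used consistently in the definition of $\alpha$.
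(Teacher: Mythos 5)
Your proof is correct and follows essentially the same route as the paper: with $T=1$ only the termination step runs, the served demand is $z\tilde D$ until $z$ reaches the reciprocal of the worst noisy-flow-to-limit ratio, and is constant thereafter. Your explicit definition of $\alpha$ (the paper's $\psi^{(1)}$ normalized to $z=1$), your use of $\max$ rather than the paper's evidently typographical $\min$ in the termination rule, and your handling of the $\alpha=0$ edge case are just slightly more careful renderings of the same two-piece computation.
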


\begin{proof}
  Note that since $T = 1$, only termination step in Framework \ref{contcasc}
  will be executed.  Denoting by $\tilde D$ the sum of demands implied by $f'$
  we have as per our cascade termination criterion that the final total demand
  at the end of $T = 1$ time-step will equal
  \begin{eqnarray}
    z \tilde D, && \mbox{if} \ \  z \, \le \, 1/\psi^{(1)}, \ \ \ \mbox{and} \\
    \frac{z}{z \psi^{(1)}} ~ \tilde D \ = \frac{1}{\psi^{(1)}}~ \tilde D, && \mbox{otherwise}.\ \ \hspace*{.2in} 
  \end{eqnarray}
\end{proof}

Using similar ideas to those shown in \cite{bienstock2011optimal}, we now turn
to the general case with $T>1$ and we will show the following theorem.
\begin{theorem} \label{th:breakpoints}
  $\Theta^{(T)}_G(z | f, \epsilon)$ has at most $\frac {m!} {(m-T+1)!}$ breakpoints.
\end{theorem}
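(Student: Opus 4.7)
The plan is to proceed by induction on $T$. The base case $T=1$ is immediate from Lemma~\ref{lem:breakpoints2}, since $\tfrac{m!}{m!}=1$.

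For the inductive step, I would reparametrize the first-time-step control by its effect $y = \lambda^1 z \in [0,z]$ on the scaling of the flows. By rule (L.a), line $i$ trips at step $1$ exactly when $y \ge u_i / \bigl((1+\epsilon_i^{(1)})|f_i|\bigr)$; sorting the $m$ ratios produces at most $m+1$ intervals $I_0,\ldots,I_m$ on the $y$-axis, on each of which a fixed set $S_k$ of exactly $k$ lines trips and the post-tripping grid $G_k = G - S_k$ has $m-k$ lines. Because load/generation balance is imposed island-by-island, the flows entering step~$2$ scale linearly in $y$, so the inductive hypothesis applied to each $G_k$ asserts that
\[
\Psi_k(y) \;:=\; \Theta^{(T-1)}_{G_k}\bigl(y \;\big|\; f^{(2)}_k,\, \epsilon\bigr)
\]
is piecewise-linear in $y$ with at most $\tfrac{(m-k)!}{(m-k-T+2)!}$ breakpoints. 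Gluing these $\Psi_k$'s across the intervals yields a function $\Phi(y)$ that may jump downward at the threshold points (tripping an additional line can only reduce downstream serving capacity), and because $\lambda^1 \in [0,1]$ one has $\Theta^{(T)}_G(z \mid f,\epsilon) = \max_{0 \le y \le z} \Phi(y)$, the running maximum of $\Phi$ on $[0,z]$.

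To count the breakpoints of this running maximum I would carry as part of the induction the auxiliary claim that each $\Psi_k$ is nondecreasing in $y$; on every $I_k$ the running max then either stays pinned at the constant inherited from earlier intervals, or locks onto $\Psi_k$ past a single ``catch-up'' transition. The main obstacle---and where I expect most of the effort to go---is tightening the resulting count from the naive sum $\sum_{k=0}^{m}\bigl(1+\tfrac{(m-k)!}{(m-k-T+2)!}\bigr)$ down to the advertised $\tfrac{m!}{(m-T+1)!} = m \cdot \tfrac{(m-1)!}{(m-T+1)!}$, a product structure that mirrors the induction's step from $m$ lines down to $m-1$. My plan is a tagging argument: associate each breakpoint of $\Theta^{(T)}_G$ to a unique pair $(\ell,q)$, where $\ell$ is one of the $m$ lines of $G$ (specifically, the line whose threshold borders the currently active interval, either as the most recently tripped line or as the marginal non-tripped line), and $q$ is one of the at most $\tfrac{(m-1)!}{(m-T+1)!}$ breakpoints of an induced $(T-1)$-stage subproblem on some grid with $m-1$ lines; dominated intervals of $\Phi$ contribute no such pairs, while the surviving ``active'' intervals each contribute at most $\tfrac{(m-1)!}{(m-T+1)!}$. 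Verifying that this tagging is well-defined and injective is the crux of the argument.
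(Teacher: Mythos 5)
Your setup is the same as the paper's: partition the range of the effective first-step scaling $y=\lambda^{(1)}z$ by the critical points $\gamma_1<\cdots<\gamma_p$ (with $p\le m$), observe that on each resulting interval the trip set, the surviving grid $G^{(2)}$, and the flow directions are fixed while magnitudes scale linearly in $y$, and recurse on a grid with at least one fewer line. Up to the counting step your plan and the paper's argument coincide.

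The gap is exactly where you say it is, and it is avoidable. Your running-maximum-plus-tagging scheme is left unverified at its self-identified crux (injectivity of the tagging), but the observation that closes the argument is one you already carry as an auxiliary claim without using it to full effect: because each $\Psi_k$ is nondecreasing in $y$, the supremum of the yield over the admissible values of $y$ in $(\gamma_{i-1},\gamma_i]$ is attained at the right endpoint $y=\gamma_i$. Hence each interval below $z$ contributes a single \emph{constant} $D_i$ as in \eqref{fixedcase1}, independent of $z$, and the whole function collapses to the finite maximum \eqref{eq:final}. The breakpoint count then follows from the recursion with no tagging argument: at most $m$ intervals at the first step, each spawning one $(T-1)$-step subproblem on a grid with at most $m-1$ lines. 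One further point to reconcile: the product $m(m-1)\cdots$ acquires one factor per time-step at which an interval decomposition is performed, so the recursion as the paper runs it actually yields $m!/(m-T)!$ (this is what the paper's own proof derives), whereas the stated bound is $m!/(m-T+1)!$; your base case, in which $T=1$ contributes a single breakpoint rather than a factor of $m$, is consistent only with the latter. Whichever convention you adopt, make the number of factors in the product match the number of controlled time-steps, since the termination step contributes only the two pieces of Lemma~\ref{lem:breakpoints2}.
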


To prove Theorem \ref{th:breakpoints}, we will need the following definitions.
\begin{definition}\label{def:crit}
  A {\em critical point} is a real $\gamma > 0$, such that for
some line $j$, $\gamma \tilde f_{j} = u_{j}$, i.e.
$\gamma |f_{j}| = u_{j}/(1 + \epsilon^t_j)$.
\end{definition}
Remark: for completeness we should use, in this definition, the superindex $t$,
which we have skipped for brevity.
Recall that we assume $u_j > 0$ for all $j$; thus let $0 < \gamma_1 < \gamma_2 < \ldots < \gamma_p$ be the set of all distinct
critical points.  Here $0 \le p \le m$.  Write $\gamma_0 = 0$ and 
$\gamma_{p+1} = +\infty$. 

\begin{definition}
For $1 \le i \le p$ let $F^{(i)} = \{ j: \, \gamma_h \tilde f_{j}  = u_{j} \}.$ 
\end{definition}

Assume that the initial flow is $z f$ with $ 1 \geq z > 0$ and
let $0 < \lambda^{(1)} \le 1$ be the optimal multiplier used to scale demands in
time-step 1.  Write
\begin{eqnarray}
 && q(z) = \argmax\{ h \, : \, \gamma_h < z \}. \label{defq}
\end{eqnarray}
Thus, $z \le \gamma_{q+1}$, and so $\lambda^{(1)} z \le \gamma_{q+1}$. We stress that 
these relationships remain valid in the boundary cases $q = 0$ and $q = p$.

\begin{NO} Let the index $i$ be such that $\lambda^{(1)} z \in (\gamma_{i-1}, \gamma_{i}]$. \end{NO}

In time-step 1 of Framework \ref{contcasc}, at line \ref{ln:apply-control} we will
scale all demands by $\lambda^{(1)}$. We assume the framework is given a connected
graph, namely, $G^{(1)}$ is connected. Therefore, in line \ref{ln:resulting-flows}
we will also scale all supplies by $\lambda^{(1)}$. Thus, for any $h \le i-1$, and
any line $j \in F^{(h)}$, we have that after Step \ref{ln:resulting-flows} the
absolute value of $f_j$ is $\lambda^{(1)} z \tilde f_{j}  > \gamma_h \tilde f_{j}  =
u_{j}$, and consequently line $j$ becomes outaged in time-step 1.  On the other
hand, for any line $j \notin \cup_{h \le i-1} F^{(h)}$, the absolute value of the
flow on $j$ immediately after Step \ref{ln:resulting-flows} is $\lambda^{(1)} z \tilde f_{j}  \le \gamma_{i} \tilde f_{j}  \le u_{j}$, and so line $j$ does not become
outaged in step 1.  In summary, the set of outaged lines is $\cup_{h \le i-1}
F^{(h)}$; in other words, we obtain the same network $G^{(2)} = G^{(1)} \setminus \cup_{h
  = 1}^{i-1} F^{(h)}$ for every $z$ with $\lambda^{(1)} z \in (\gamma_{i-1},
\gamma_{i}]$.

\begin{NO} For an index $j$, write $\cK(j)$ = set of components of $G^{(1)}
  \setminus \cup_{h = 1}^{j} F^{(h)}$. \end{NO}

Let $H \in \cK(i-1)$ and denote the initial supply-demand vector by $\beta$.
Prior to line \ref{ln:adjust} in step 1, the supply-demand vector for $H$ is
precisely the restriction of $\lambda^{(1)} z \beta$ to the buses of $H$, and when
we adjust supplies and demands in line \ref{ln:adjust}, we will proceed as
follows
\begin{itemize}
\item if $ \sum_{s \in \cD \cap H} (-\lambda^{(1)} z \beta_s) \, \ge \, \sum_{s \in \cG \cap H} (\lambda^{(1)}  z \beta_s)$ then for each demand bus $s \in \cD \cap H$ we will reset
its demand to $ -r \lambda^{(1)}  z \beta_s$, where
$$ r = \frac{\sum_{s \in \cG \cap H} (\lambda^{(1)} z \beta_s)}{\sum_{s \in \cD \cap H} (-\lambda^{(1)}  z \beta_s)} = -\frac{\sum_{s \in \cG \cap H} (\beta_s)}{\sum_{s \in \cD \cap H} (\beta_s)},$$
and we will leave all supplies in $H$ unchanged. 
\item likewise, if
$ \sum_{s \in \cD \cap H} (-z \lambda^{(1)} \beta_s) \, < \, \sum_{s \in \cG \cap H} (\lambda^{(1)}  z \beta_s)$ then the supply at each bus $s \in \cG \cap H$ will be reset
to $ r \lambda^{(1)} z \beta_s$, where
$$r = -\frac{\sum_{s \in \cD \cap H} (\beta_s)}{\sum_{s \in \cG \cap H}} (\beta_s),$$
but we will leave all demands in $H$ unchanged.
\end{itemize}
Note that in either case, in time-step 2 component $H$ will have a supply-demand
vector of the form $\lambda^{(1)} z \beta^H$, where $\beta^H$ is a supply-demand
vector which is {\em independent} of $z$. The supply-demand vector $\beta^H$
corresponds with flows $f^H$ on the lines in $H$, which are therefore also
{\em independent} of $z$. The flows $f^{(2)}$ on $G^{(2)}$ are $\cup_{H\in
  G^{(2)}} f^H$, and the final total demand will be
\begin{eqnarray}
\Theta_{G^{(2)}}^{(T-1)}(\lambda^{(1)} z | f^{(2)}) \label{Hfinal}
\end{eqnarray}

For a given value of $i$, since $z\leq 1$, also $i \leq q$ holds.  As noted
above, by definition (\ref{defq}) of $q$ we have that $\gamma_{i} \le
\gamma_{q} < z$.  Thus, the expression in (\ref{Hfinal}) is maximized when
$\lambda^{(1)} = \frac{\gamma_{i}}{z}$, and we obtain final ($T$-step) demand
equal to
\begin{eqnarray}
D_i & \doteq & \Theta_{G^{(2)}}^{(T-1)}(\gamma_{i} | \cup_{H\in
  \cK(i-1)} f^H), \label{fixedcase1}
\end{eqnarray}

In summary,
\begin{eqnarray}
\Theta_{G^{(1)}}^{(T)}(z | f) & = & \max _{1 \leq i\leq q(z)} D_i \label{eq:final}
\end{eqnarray}
We are now ready to prove Theorem \ref{th:breakpoints}

\begin{proof}
  In Lemma \ref{lem:breakpoints2}, we showed that for $T=1$ the function has
  two breakpoints.  As can be seen from \eqref{eq:final}, the number of
  breakpoints is at most $q(z)$. It is important to note that (a) $q(z)$ is at
  most the number of lines in the graph and (b) the number of non-tripped lines decreases
  by at least $1$ in every time-step (otherwise, the cascade stopped). Therefore,
  the number of breakpoints for $T$ steps is $m (m-1) \ldots (m-T+1) = \frac
  {m!} {(m-T)!}$.
\end{proof}

\section{Solving the optimal robust control problem using the Sample Average Approximation Method}
The key methodology that we will reply in order to solve the problem of interest
is the Sample Average Approximation (SAA) method~\cite{KS99}. The method
begins by taking $m T N$ i.i.d
samples (each under the assumed distribution) of $\epsilon$ values.
We view these values as arranged into $N$ ensembles, or \textit{realizations}. Here, for $1 \le k \le N$
we let $\epsilon^k$ denote the $k^{th}$ realization, consisting of values 
$\epsilon^{(t),k}_i$, for $1 \le t \le T$ and all lines $i$.  

\begin{definition}
Given any candidate vector $\Lambda = (\lambda^{(1)}, \lambda^{(2)}, \ldots, \lambda^{(T-1)})$ for solving the robust optimal control problem, denote
by $\Theta_G^T(z | f, \Lambda, k)$ to be the
yield obtained at termination if at the start of the first time-step
the flows in the grid are equal $z f$, if we use $\Lambda$ as the control
vector at each time-step, and $\epsilon^{(t),k}_i$ is used for each $t$ and $i$
in the line-tripping rule (L.a).
\end{definition}

Thus, $\Theta_G^T(1 | f^{(1)}, \Lambda, k)$ indicates the behavior of the 
control vector $\Lambda$ under the fixed noise vector $\epsilon^{k}$.  This 
motivates the following definition.
\begin{definition} {\bf (Sample average robust control problem.)}  Given the
$N$ realizations $\epsilon^{1}, \ldots, \epsilon^{N}$, compute a control 
vector $\Lambda$ so as to maximize
\begin{eqnarray}
&& \frac{1}{N} \sum_{k = 1}^N \Theta_G^T(1 | f^{(1)}, \Lambda, k). \label{saasum}\end{eqnarray}
\end{definition}
This definition is appealing in that, clearly, if $N$ is large a control that
maximimized (\ref{saasum}) should clearly perform well in our stochastic setting -- since it maximizes the average outcome over many possible noise scenarios.  
This observation can in fact be rigorously established by using the Central Limit Theorem (and observing that all random variables under consideration are bounded). See e.g. equation (2.23)
from~\cite{KS99}. If we want to solve the Robust Optimal Control problem within
additive error $\delta > 0$ with probability at least $1 - \alpha$
one can establish an upper bound of the form $N = O( \frac {3\sigma^2} {\delta^2/4} \log(1/\alpha)$, where $\sigma$ is the standard deviation of the distribution of 
the $\epsilon$ values \footnote{Some technical points skipped for brevity.} We
can establish an analogue of Lemma \ref{lem:breakpoints2}.  Let 
$\hat \eta^T_G (z| f) \ \doteq \ \frac{1}{N} \sum_{k = 1}^N \Theta_G^T(z | f, \Lambda, k)$.  This is the maximum average yield at termination under all the
realizations if we start time-step 1 with flows $z f$.

\begin{lemma} \label{lem:breakpoints}
$\hat \eta^T_G  (z| f)$ is nondecreasing, piecewise-linear, with at most
\begin{equation*}
  N \frac {m!} {(m-T)!}
\end{equation*}
breakpoints.
\end{lemma}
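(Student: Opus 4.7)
The plan is to lift the recursive analysis from the proof of Theorem~\ref{th:breakpoints} to the SAA setting and carry out an induction on $T$. Monotonicity of $\hat\eta^T_G(z|f)$ in $z$ is immediate by a scaling argument: if $0<z\le z'$ and $\Lambda=(\lambda^{(1)},\ldots,\lambda^{(T-1)})$ is any feasible control for initial scale $z$, then $\Lambda'=((z/z')\lambda^{(1)},\lambda^{(2)},\ldots,\lambda^{(T-1)})$ is feasible for $z'$ and reproduces exactly the same flows, tripping pattern, and yield in every one of the $N$ realizations, so $\hat\eta^T_G(z'|f)\ge\hat\eta^T_G(z|f)$.

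To expose the piecewise-linear structure I would aggregate the first-step critical points across all $N$ realizations: the combined set $\{\,u_j/((1+\epsilon^{(1),k}_j)|f_j|)\,:\,1\le j\le m,\ 1\le k\le N\,\}$ has at most $Nm$ elements, sorted as $0=\mu_0<\mu_1<\cdots<\mu_L$ with $L\le Nm$. On each interval $\lambda^{(1)}z\in(\mu_{l-1},\mu_l]$ the set of time-$1$ tripped lines in every realization $k$ is fully determined, and the same balancing and scaling argument that precedes equation~(\ref{Hfinal}) shows that the post-shedding grids $G^{(2),k,l}$ and the normalized flows $\bar f^{(2),k,l}$ are independent of $z$ and $\lambda^{(1)}$ throughout that interval. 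Monotonicity then forces the optimal first-step choice to set $\lambda^{(1)}z=\min(z,\mu_l)$, so
\begin{equation*}
\hat\eta^T_G(z|f)\;=\;\max_{l:\,z>\mu_{l-1}}\;W_l\bigl(\min(z,\mu_l)\bigr),
\end{equation*}
where $W_l$ denotes the $(T{-}1)$-step SAA value on the $N$-ensemble $\{G^{(2),k,l}\}_k$.

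The base case $T=1$ is Lemma~\ref{lem:breakpoints2}: one interior breakpoint per realization gives at most $N\le N\,m!/(m-1)!$ breakpoints. For the inductive step, every grid $G^{(2),k,l}$ has at most $m-1$ lines, so by induction each $W_l$ has at most $N\,(m-1)!/(m-T)!$ breakpoints; combining with the $L\le Nm$ first-step intervals is intended to deliver the claimed $N\,m!/(m-T)!$.

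The main technical obstacle is precisely this last counting: a naive product of $Nm$ intervals with $N\,(m-1)!/(m-T)!$ breakpoints per interval contains a spurious extra factor of $N$. I would resolve this by a charging argument in which each breakpoint of $\hat\eta^T_G(\cdot|f)$ is identified with a distinct length-$T$ ``tripping history''---a sequence $(k_t,j_t)_{t=1}^T$ of (realization, line) pairs in which $j_t$ is removed from the grid used in realization $k_t$ at step $t$---of which there are at most $N\cdot m(m-1)\cdots(m-T+1)=N\,m!/(m-T)!$. Establishing this injection, and showing that the monotone upper envelope $\max_l D_l$ creates no breakpoints outside those already indexed by histories, is the delicate part of the proof.
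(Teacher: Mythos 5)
Your recursive decomposition does not close, and you are right to flag the counting as the obstacle --- but the obstacle is fatal as the argument stands, and the fix you sketch does not repair it. The induction hypothesis gives each $W_l$ at most $N\,(m-1)!/(m-T)!$ breakpoints, and you have up to $L\le Nm$ first-step intervals, so the top level of the recursion already yields $N^2\,m!/(m-T)!$; unrolling all $T$ levels, the per-level factor of $N$ compounds to $N^T\,m!/(m-T)!$. The charging argument you propose to rescue this is not established, and its stated count is internally inconsistent: if the realization index $k_t$ in a history $(k_t,j_t)_{t=1}^T$ is allowed to vary with $t$ (which is what your aggregated critical-point sets naturally produce, since the breakpoint created at step $t$ can come from any realization), then the number of histories is $(Nm)(N(m-1))\cdots(N(m-T+1))=N^T\,m!/(m-T)!$, not $N\cdot m(m-1)\cdots(m-T+1)$. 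To get the single factor of $N$ you must show that every breakpoint of $\hat\eta^T_G(\cdot|f)$ can be attributed to \emph{one} realization across all $T$ steps, and that claim is exactly the missing idea.

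The paper's own proof supplies that idea directly and skips the recursion entirely: it observes that $\hat\eta^T_G(z|f)$ is the average of the $N$ per-realization functions $\Theta^{(T)}_G(z|f,\epsilon^k)$, each of which is piecewise linear with at most $m!/(m-T)!$ breakpoints by Theorem~\ref{th:breakpoints}, and a sum or average of piecewise-linear functions has its breakpoints contained in the union of the summands' breakpoint sets --- hence at most $N\,m!/(m-T)!$ in total. (Your scaling argument for monotonicity is fine and matches the spirit of Lemma~\ref{lem:breakpoints2}.) If you want to pursue your route, the charging argument you need is precisely a proof that each breakpoint of the SAA value function is a breakpoint of a single realization's value function; once you have that, the recursive bookkeeping becomes unnecessary and you have rederived the paper's one-line argument.
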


\begin{proof}
  By theorem \ref{th:breakpoints}, $\Theta^{(T)}_G(z | f, \epsilon)$ is
  piecewise-linear with at most $\frac {m!} {(m-T+1)!}$ breakpoints.  Since
  $\hat \eta^T_G  (z| f)$ is an average taken over $N$ instances of
  $\Theta^{(T)}_G(z | f, \epsilon)$, the lemma follows.
\end{proof}

Using Lemma~\ref{lem:breakpoints}, it follows that we can efficiently
determine the control in order to maximize the expected total demand in the
face of measurement errors.

\section{Simulation Study}

In this section we present simulation results using a modified form of the IEEE 118-bus system, with line limits set approximately at $20 \%$ above flow values\footnote{We will present experiments with larger examples at the conference}. We first describe our implementation and specific noise model (see (L.a) above). 
We then present and discuss the computed $\hat \eta^T_G  (z| f)$, obtained via the SAA, and compare the performance of our robust
control algorithm to its non-robust version from~\cite{bienstock2011optimal}.

\subsection{Implementation and Noise Model}

As discussed above an approximately optimal load shedding control can be found by
computing the function $\hat \eta^T_G  (z| f)$, and for this purpose
we will rely on Lemma~\ref{lem:breakpoints2}. Thus, at each every time-step
$t$, we solve the DC equations to obtain the flows and compute the critical
points $\gamma_1,\ldots,\gamma_p$. An important point is that these are the 
critical points arising from all realizations (recall Definition \ref{def:crit}).  Next, for each $i$ with $1 \le i \le p$ we proceed as follows. 

Note that any two choices for $\lambda^1$ in the open interval $(\gamma_{i-1}, \gamma_{i})$ will result in exactly the same set of line trips under all realizations.  In fact, any two choices for $\lambda^1$ in $(\gamma_{i-1}, \gamma_{i})$
will produce the same graph $G(i)$ (which we can easily compute) but with supply-demand
vectors that differ by a constant factor -- and thus, in a common flow vector
(up to scale) which we denote by $f(i)$.  It follows that if we (recursively)
compute the function $\hat \eta^{T-1}_{G(i)}  (z| f(i))$ (which is a function
of the real $z$) we will obtain $\hat \eta^{T}_{G}  (z| f)$ in the interval
$(\gamma_{i-1}, \gamma_{i})$.

Next we discuss the specific noise model used to implement rule (L.a).  We will
first describe a specific noise model.  Then we will prove it is of the 
form (L.a).  A line $j$ will trip
\begin{subequations}\label{practtrip}
\begin{eqnarray}
\hspace{-.5in}&& \mbox{with probability $1$, if $|f_j| \geq u_j$} \\
\hspace{-.5in}&& \mbox{with probability $1/2$, if $u_j > |f_j| \geq 0.95 u_j$}\\
\hspace{-.5in} && \mbox{with probability $0$, otherwise.}
\end{eqnarray}
\end{subequations}
\begin{lemma} Rule (\ref{practtrip}) is an example of (L.a).
\end{lemma}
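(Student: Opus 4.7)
The plan is to construct, for each line $i$ at each time-step $t$, a distribution for $\epsilon^t_i$ whose support lies in $[-b, b]$ for a suitable $b \in [1/19, 1)$, such that the (L.a) trip probability $P(\tilde{f}^t_i \geq u_i) = P((1+\epsilon^t_i)|f^{(t)}_i| \geq u_i)$ matches the rule (\ref{practtrip}) case-by-case. Rule (L.a) permits the law of $\epsilon^t_i$ to be chosen per line and per time-step, and the flow magnitude $|f^{(t)}_i|$ is observable at time $t$, so I will let the law depend on the ratio $|f^{(t)}_i|/u_i$.

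The construction splits along the three regimes of (\ref{practtrip}). If $|f^{(t)}_i| \geq u_i$, take $\epsilon^t_i = 0$ deterministically; then $\tilde{f}^t_i = |f^{(t)}_i| \geq u_i$, so the line trips with probability one. If $0.95\, u_i \leq |f^{(t)}_i| < u_i$, let $\epsilon^t_i$ take the values $+(u_i/|f^{(t)}_i|-1)$ and $-(u_i/|f^{(t)}_i|-1)$ each with probability $1/2$; the positive value gives $\tilde{f}^t_i = u_i$ and therefore triggers a trip, while the negative value gives $\tilde{f}^t_i = 2|f^{(t)}_i| - u_i < u_i$ and prevents one, so the trip probability is exactly $1/2$. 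Finally, if $|f^{(t)}_i| < 0.95\, u_i$, take $\epsilon^t_i = 0$; then $\tilde{f}^t_i = |f^{(t)}_i| < u_i$ and no trip occurs.

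To finish, I verify the uniform support bound. In the middle regime, $|\epsilon^t_i| = u_i/|f^{(t)}_i| - 1 \leq 1/0.95 - 1 = 1/19$, and in the other two regimes $\epsilon^t_i = 0$, so any $b \in [1/19, 1)$ uniformly bounds the support, which is consistent with the paper's stipulation that $b \ll 1$. The only delicate point is the boundary case $|f^{(t)}_i| = 0.95\, u_i$, where the required perturbation saturates the bound; this is what determines the smallest admissible $b$ and is essentially the only substantive step, since otherwise the argument is a direct case analysis.
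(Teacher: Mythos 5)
Your case analysis does reproduce the trip probabilities of (\ref{practtrip}) regime by regime, but the construction has a structural problem: you let the law of $\epsilon^t_i$ depend on the realized ratio $|f^{(t)}_i|/u_i$. Rule (L.a) asks for $\epsilon^t_i$ drawn from a \emph{known distribution}, and the way this is used downstream is exactly where the distinction bites: the SAA method draws all $mTN$ noise samples i.i.d.\ \emph{before} the cascade is simulated, arranges them into realizations $\epsilon^k$, and then evaluates the deterministic function $\Theta^{(T)}_G(z \,|\, f, \epsilon)$ under each fixed realization. A noise law that depends on $|f^{(t)}_i|$ --- a quantity that is endogenous, determined by the control $\Lambda$ and by which lines have already tripped --- cannot be pre-sampled in this way, so your $\epsilon^t_i$ is not ``an example of (L.a)'' in the sense the rest of the paper requires. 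This is the missing idea, not the boundary case $|f^{(t)}_i| = 0.95\,u_i$ that you single out as the delicate point.

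The flow-dependence is also unnecessary. The paper's proof uses a single fixed two-point law for every line and every time-step: $\epsilon^{(t)}_j = 0.05/0.95 = 1/19$ with probability $1/2$ and $\epsilon^{(t)}_j = 0$ with probability $1/2$. One checks directly that this one distribution reproduces all three regimes at once: if $|f_j| \ge u_j$ then $\tilde f_j \ge u_j$ under either outcome, so the line trips with probability $1$; if $0.95\,u_j \le |f_j| < u_j$ then only the outcome $\epsilon = 1/19$ yields $\tilde f_j = (20/19)|f_j| \ge u_j$, so the trip probability is exactly $1/2$; and if $|f_j| < 0.95\,u_j$ then $(20/19)|f_j| < u_j$ under both outcomes, so the trip probability is $0$. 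Your support bound $b = 1/19$ is the same, but the fixed law removes the dependence on the state and makes the lemma compatible with the sampling scheme; you should replace your three flow-indexed distributions with this single one.
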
 
\noindent {\em Proof.}
Define the random variable $\epsilon_j^{(t)}$ for every $j$ and $t$ as
follows:
\begin{eqnarray}
\epsilon_j^{(t)} =  \frac{0.05}{0.95}, && \mbox{with probability } 0.5, \ \ \  \mbox{and} \\
\epsilon_j^{(t)} =  0, && \mbox{with probability 0.5}.\ \ \hspace*{.2in} 
\end{eqnarray}

Recall that $\tilde{f}_j^{(t)} = (1+\epsilon_j^{(t)}) f_j^t$ and consider the
case where $f_j^t \geq 0.95 u_j$. With probability $0.5$, $\epsilon_j^{(t)}=0$
and therefore $\tilde{f}j^{(t)}<u_j$ and line $j$ will not be tripped. But,
with probability $0.5$ we get $\epsilon_j^{(t)}=\frac {0.05}{0.95}$ and
$\tilde{f}j^{(t)} = (1+\frac{0.05} {0.95})f_j^t \geq (1+\frac{0.05}
{0.95})0.95 u_j = u_j$ and line $j$ will trip.  \QED

\subsection{Results}

We tested values of $T=2,3,4,5$. The initial cascade is caused by tripping line
$(4,5)$, since tripping this line causes a relatively major cascade in the
 118-bus system.

Fig.~\ref{fig:line2_realizations} shows the yield vs the scaling in the first
time step for a value of $T=3$. In Fig.~\ref{fig:line2_realizations}(a) the
number of realizations used for the Sample Average Approximation (SAA) is $5$
while in Fig.~\ref{fig:line2_realizations}(b) this number is increased to
$120$. As expected, increasing the number of realizations affects the expected
yield in the different linear segments. In Fig.~\ref{fig:line2_realizations},
we see that despite the fact that the graph differ, in this specific case the
maximum yield remains similar.

\begin{figure}
\centering
\subfigure[]{\includegraphics[width=0.485\columnwidth]{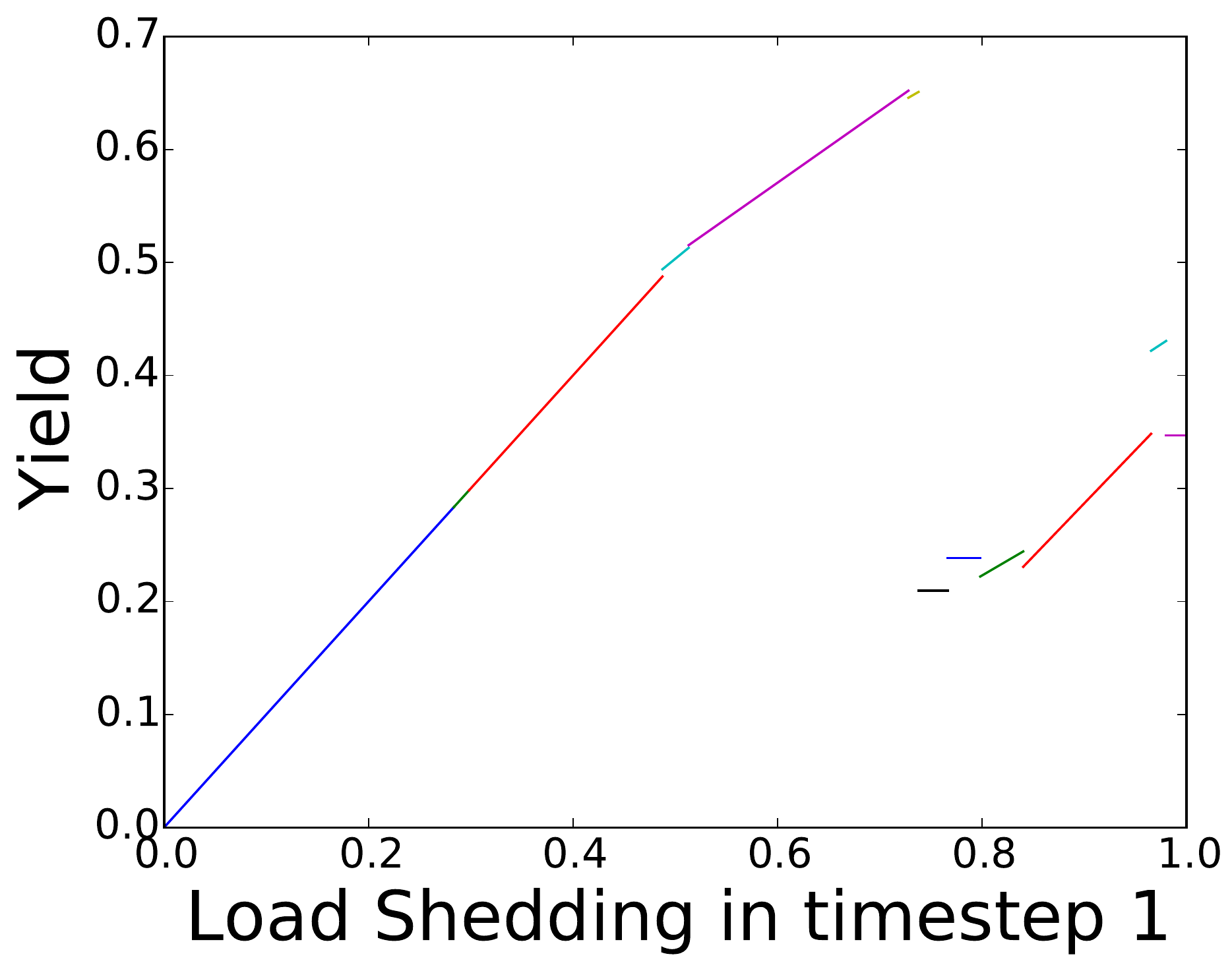}}
\subfigure[]{\includegraphics[width=0.485\columnwidth]{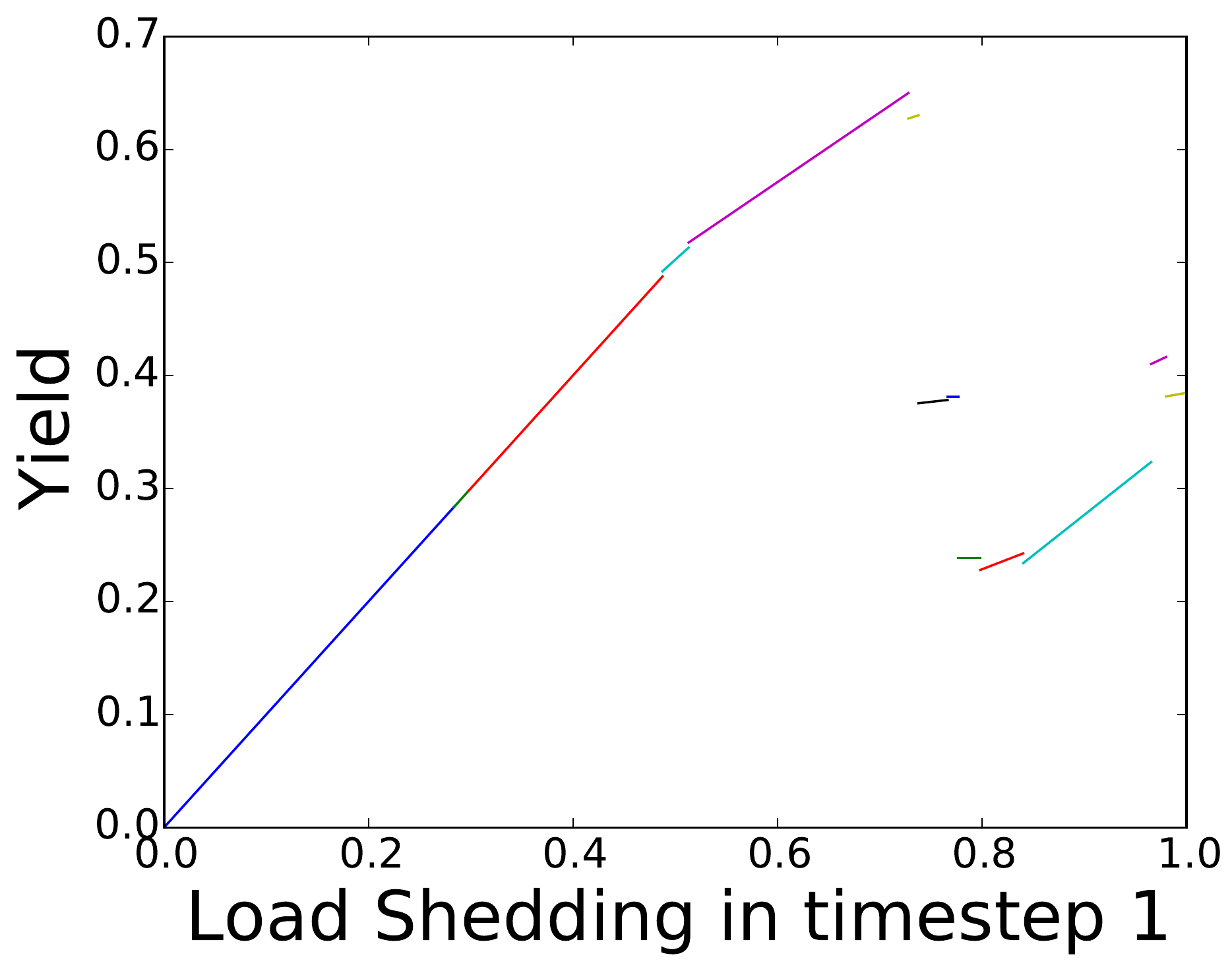}}
\caption{The yield $T=3$ in the first time step for: (a) 5 realizations, and
  (b) 120 realizations. \label{fig:line2_realizations}}
\end{figure}

While Fig.~\ref{fig:line2_realizations} shows the load shedding required in
the first timestep (shedding of $\approx 0.72$ maximize the yield), the figure
does not reveal what will be the load shedding at the second
time step. Fig.~\ref{fig:line2_2_vs_3}(a) shows the yield as a function of load
shedding in the second times tep {\em assuming that in the first time step an
  optimal choice for $\lambda^1$ is used}. As can be observed from
Fig.~\ref{fig:line2_2_vs_3}(a), the optimal choice for $\lambda^1$ is $\approx
0.85$. Fig.~\ref{fig:line2_2_vs_3}(b) considers the case where $T=2$, it can
be seen that that load shedding is done at only one time-step. While the maximum yield is
slightly lower, overall it is close to the maximum yield ($\approx 62\%$) for
the case where $T=3$. We will now show that the difference in the maximum
resulting from increasing $T$ can be significant.

\begin{figure}
\centering
\subfigure[]{\includegraphics[width=0.485\columnwidth]{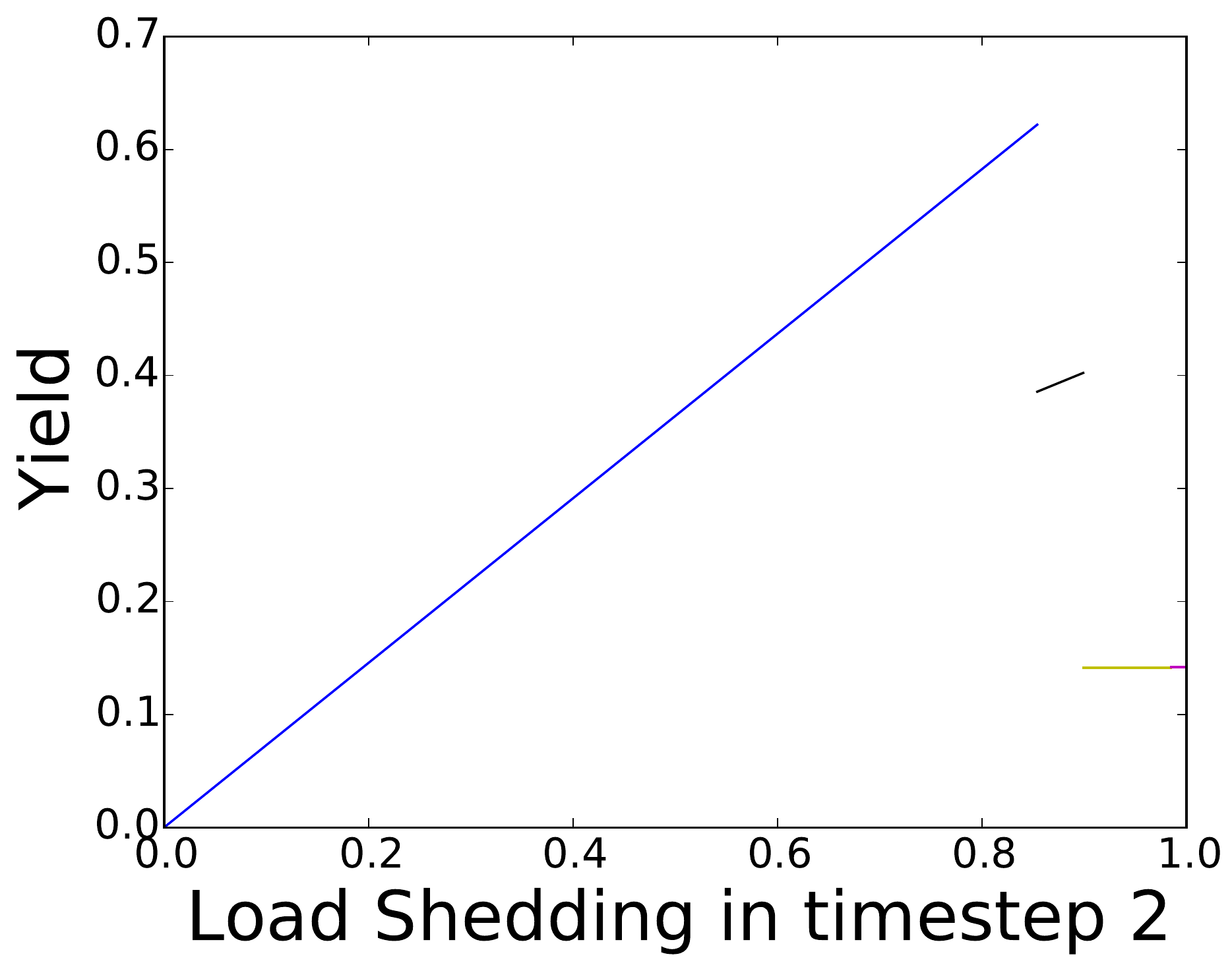}}
\subfigure[]{\includegraphics[width=0.485\columnwidth]{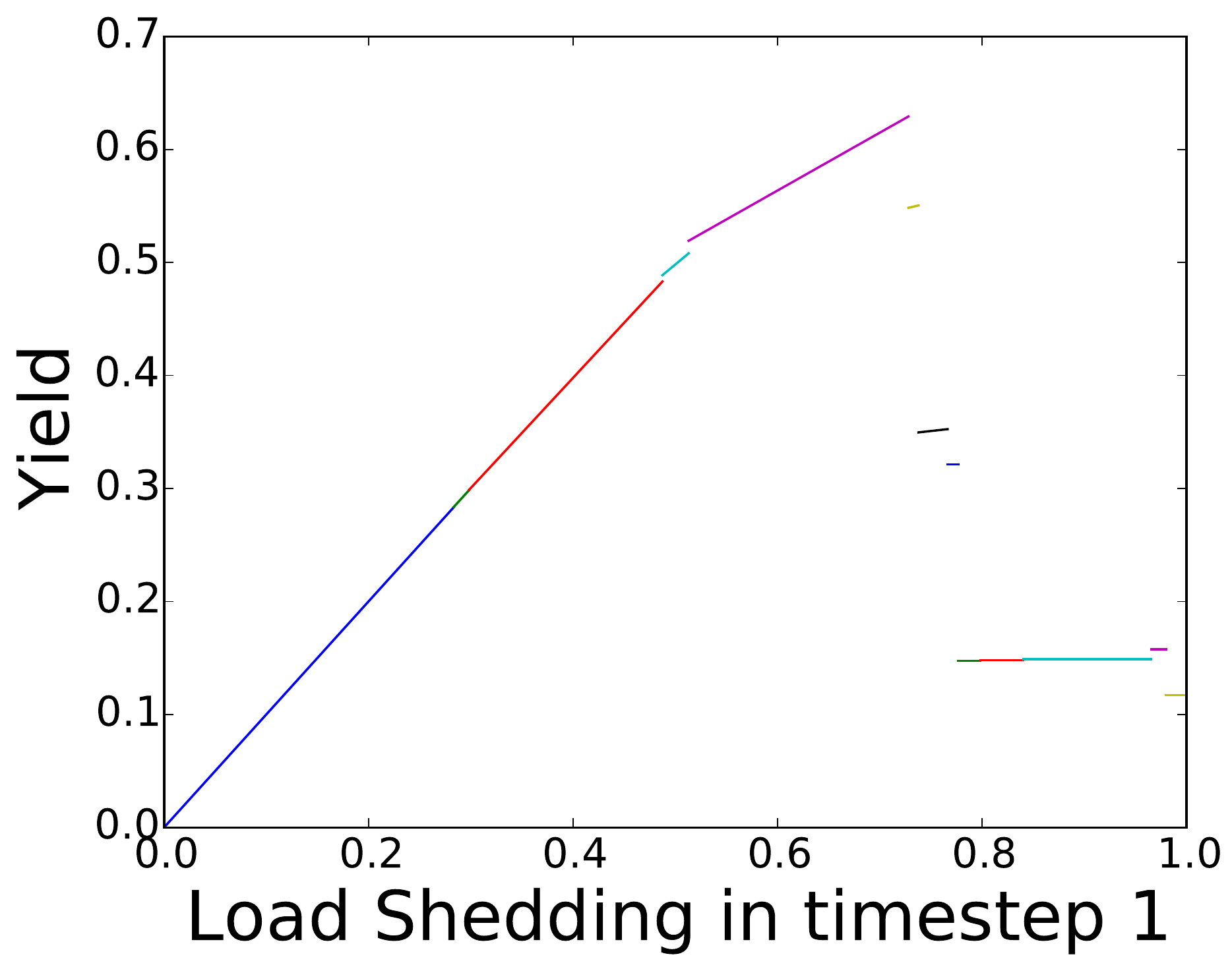}}
\caption{The yield when considering 120 realizations for: (a) the second
  time step and for $T=3$, given that an optimal load shedding scaling is done in the first
  time step, and (b) the first time step and for $T=2$. \label{fig:line2_2_vs_3}}
\end{figure}

In Fig.~\ref{fig:line2_5_steps} we set $T=5$. Fig.~\ref{fig:line2_5_steps}(a)
shows the yield as a function of load shedding in the first time steps and can
be used to find the optimal load shedding scaling for the first time
step. Fig.~\ref{fig:line2_5_steps}(b) shows the yield as a function of load
shedding in the second time step given that the optimal load shedding scaling
was done in the first time step. We can see that in this case making a small
amount of load shedding in the first two time steps results with a very good
final yield ($\approx 85\%$) compared to the case of $T=3$. We remark that
in the rest of the time steps the optimal control is to carry out very minimal
load shedding.

\begin{figure}
\centering
\subfigure[]{\includegraphics[width=0.485\columnwidth]{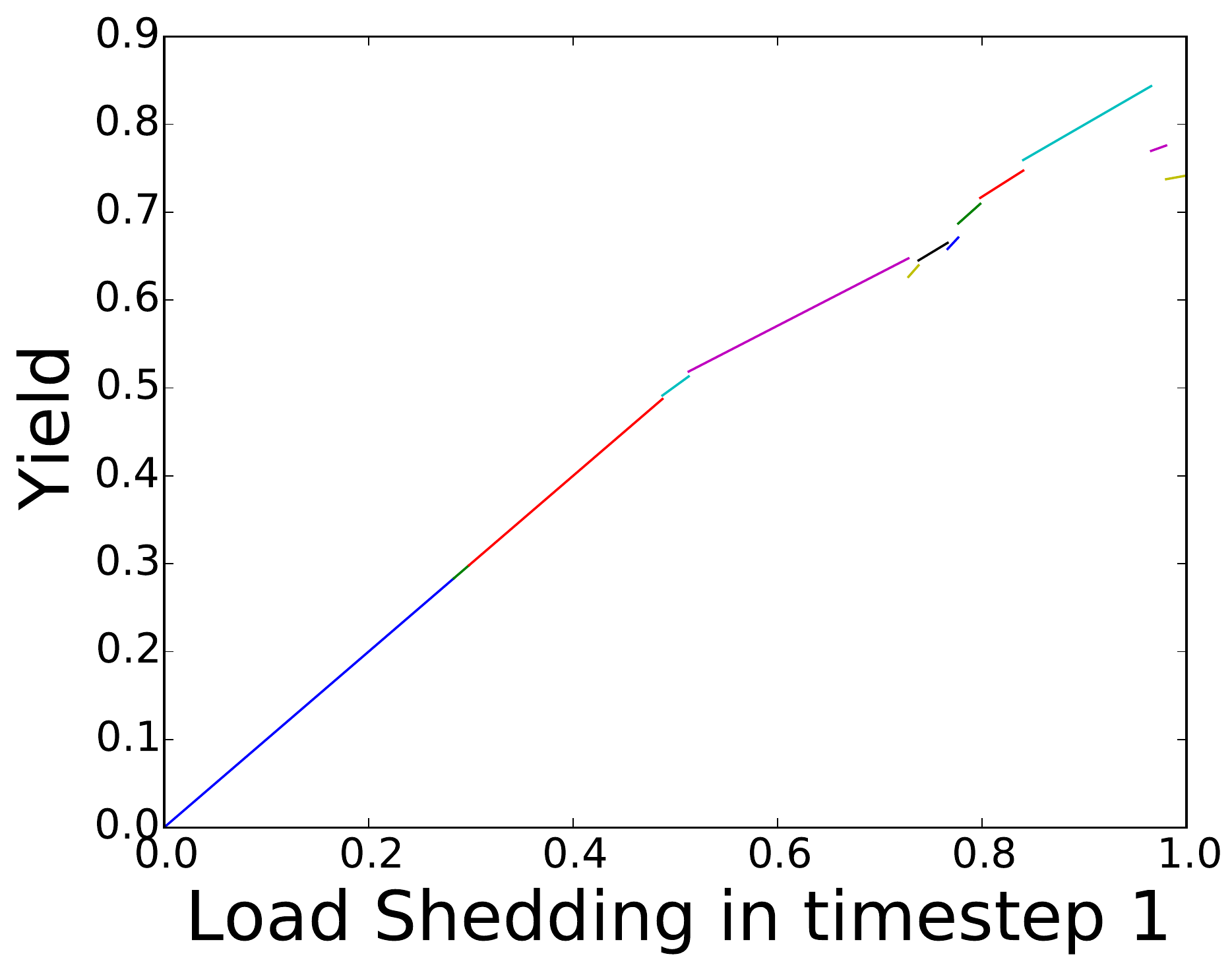}}
\subfigure[]{\includegraphics[width=0.485\columnwidth]{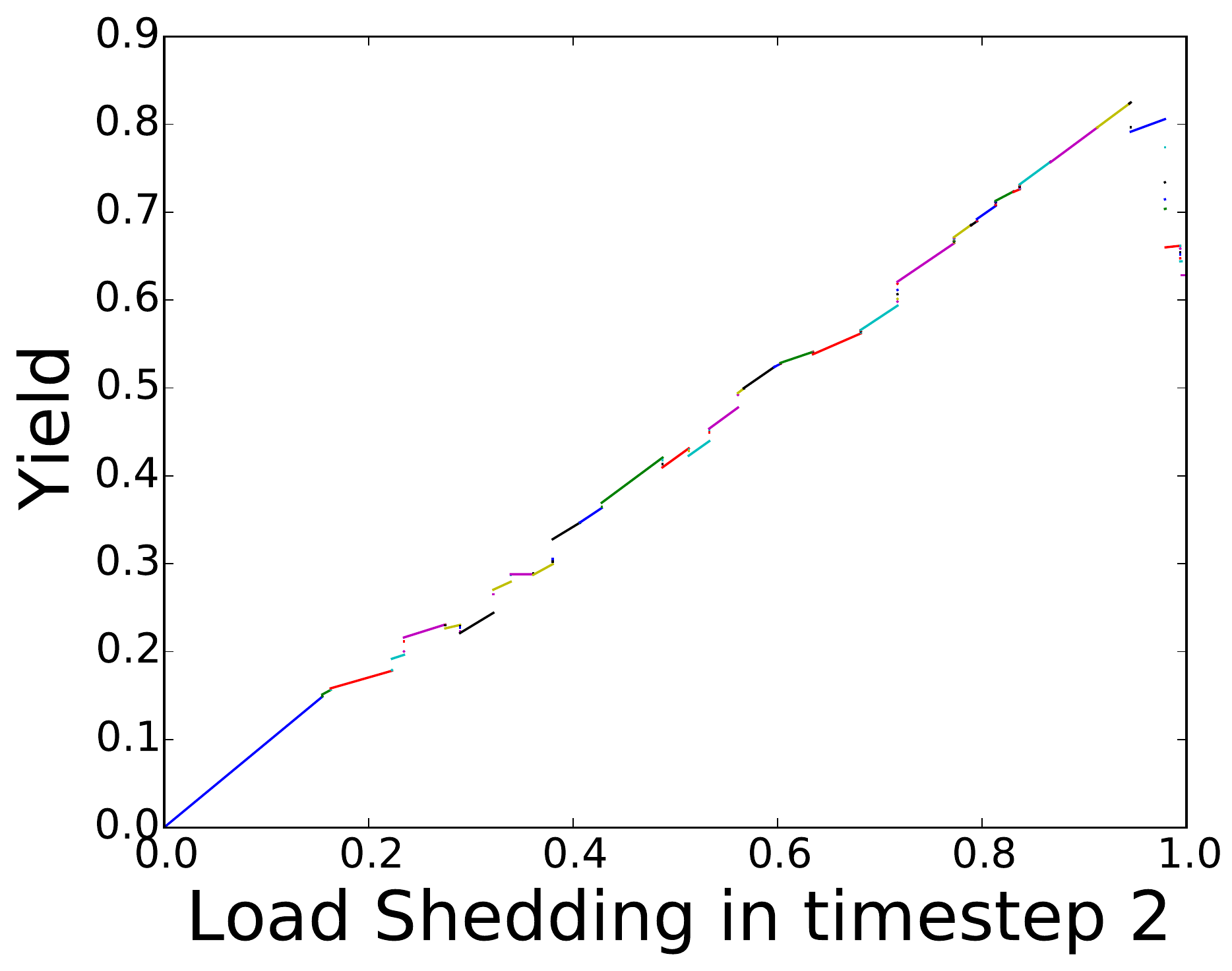}}
\caption{The yield when considering 120 realizations and $T=5$ for: (a) the
  first time step and for $T=2$, and (b) the second time step, given that 
  optimal load shedding is performed in the first time-step. \label{fig:line2_5_steps}}
\end{figure}

A natural question is what would the non-robust control algorithm from
\cite{bienstock2011optimal} would obtain in the considered model. To answer
this question, we compute the control using the algorithm from
\cite{bienstock2011optimal}. As expected, in the non-robust model, the
algorithm from \cite{bienstock2011optimal} is the best, since there is perfect
knowledge of the flows and the algorithm from \cite{bienstock2011optimal} is
optimal. However, when measurement errors are present, the algorithm from
\cite{bienstock2011optimal} performs much worse than the robust algorithm
presented here. Table~\ref{tab:yields} shows the yield obtained by both
algorithm. Additionally, Table~\ref{tab:yields} further demonstrates that
increasing $T$ improves the yield. We remark that the non-robust model refers
to the model from \cite{bienstock2011optimal} and the robust model refers
to the model considered in this paper.

\begin{table}
  \centering
  \begin{tabular}{|l|l|l|l|l|}
    \hline
    \multicolumn{1}{|c|}{$T$} & 2 & 3 & 4 & 5 \\ \hline
    Non-robust solution &  \multirow{2}{*}{$65.46\%$} &
    \multirow{2}{*}{$65.46\%$} & \multirow{2}{*}{$74.44\%$} & \multirow{2}{*}{$86.84\%$}\\ 
    and {\em non-robust model} & & & & \\ \hline
    Non-robust solution &  \multirow{2}{*}{$31.92\%$} &
    \multirow{2}{*}{$30.46\%$} & \multirow{2}{*}{$47.75\%$} & \multirow{2}{*}{$23.07\%$}\\ 
    and {\em robust model} & & & & \\ \hline
    robust solution &  \multirow{2}{*}{$62.19\%$} & \multirow{2}{*}{$62.19\%$} &
    \multirow{2}{*}{$70.73\%$} & \multirow{2}{*}{$78.36\%$}\\ 
    and {\em robust model} & & & & \\ \hline

  \end{tabular}
\vspace{5pt}
  \caption{Yields obtained from performing the solution obtained by the
    robust and non-robust algorithms under the robust and non-robust model.}
  \label{tab:yields}  
\end{table}

We also compute a $95\%$ confidence interval on the yield obtained when
performing the optimal load shedding. The confidence interval is computed for
different number of realizations and $T=5$ and it is shown in
Table~\ref{tab:confidence}. The obtained confidence interval are overall very
good. As expected, we can see improvement in the confidence interval as we
take more realizations.

\begin{table}
  \centering
  \begin{tabular}{|l|l|l|l|l|}
    \hline
    \multicolumn{1}{|c|}{$T$} & 2 & 5 \\ \hline
    5 realizations & [$62.39\%,63.19\%$] & [$83.02\%,84.87\%$] \\ \hline
    60 realizations & [$62.7\%,63.08\%$] & [$83.84\%,84.62\%$]\\ \hline
    120 realizations & [$62.79\%,63.04\%$] & [$84.05\%,84.64\%$]\\ \hline
  \end{tabular}
\vspace{5pt}
  \caption{A $95\%$ confidence interval for optimal expected yield for different number of realizations.}
  \label{tab:confidence}  
\end{table}

\bibliography{cdc15}
\bibliographystyle{IEEEtranS}

\end{document}